\newcommand{\newabstract}[1]{%
  \par\bigskip
  \csname otherlanguage*\endcsname{#1}%
  \csname captions#1\endcsname
  \item[\hskip\labelsep\scshape\abstractname.]
}
\newtheorem{proposition}{Proposition}
\newtheorem{theorem}{Theorem}
\newtheorem{ques}{Question}
\begin{document}
\title[A note on a question of Dimca and Greuel]{A note on a question of Dimca and Greuel}
\author[P. Almir\'{o}n]{Patricio Almir\'{o}n}
\author[G. Blanco]{Guillem Blanco}
\keywords{Curve singularities, Tjurina number, Milnor number}
\subjclass[2010]{Primary 14H20; Secondary 14H50, 32S05}
\thanks{The first author was partially supported by Spanish Ministerio de Ciencia, Innovaci\'{o}n y Universidades MTM2016-76868-C2-1-P. The second author was supported by Spanish Ministerio de Ciencia, Innovaci\'{o}n y Universidades MTM2015-69135-P and Generalitat de Catalunya 2017SGR-932 projects. }

\address{Departamento de \'{A}lgebra, Geometr\'{i}a y Topolog\'{i}a\\
Facultad de Ciencias Matem\'{a}ticas\\
Universidad Complutense de Madrid\\
28040, Madrid, Spain.}
\email{palmiron@ucm.es}
\address{Departament de Matemàtiques\\
Univ. Politècnica de Catalunya\\
Av. Diagonal 647, Barcelona 08028, Spain.}
\email{Guillem.Blanco@upc.edu}

\begin{abstract}
In this note we give a positive answer to a question of Dimca and Greuel about the quotient between the Milnor and Tjurina numbers of an isolated plane curve singularity in the cases of one Puiseux pair and semi-quasi-homogeneous singularities.
\newabstract{french}
Dans cette note, nous donnons une r\'{e}ponse positive \`{a} une question de Dimca et Greuel sur le quotient entre les nombres de Milnor et de Tjurina d'une singularit\'{e} de courbe plane isolée dans le cas d'une paire de Puiseux et de singularit\'{e}s semi-quasi-homog\`{e}nes.
 \end{abstract}
 \selectlanguage{english}
\maketitle
\section{Introduction}
Let \( f : (\mathbb{C}^2, \boldsymbol{0}) \longrightarrow (\mathbb{C}, 0) \) with \( f(\boldsymbol{0}) = 0 \) be a germ of a holomorphic function defining an isolated plane curve singularity. Associated to any isolated plane curve singularity \( f \) one has the Milnor number \( \mu \) and the Tjurina number \( \tau \) that are defined as
\[ \mu := \dim_{\mathbb{C}} \frac{ \mathbb{C} \{x, y\} }{ (\partial f/ \partial x, \partial f/ \partial y) }, \quad \tau := \dim_{\mathbb{C}} \frac{ \mathbb{C} \{x, y\} }{ (f, \partial f/ \partial x, \partial f/ \partial y) }. \]

In \cite{dim}, Dimca and Greuel posed the following question:
\begin{ques} \label{conjecture}
Is it true that $\mu/\tau < 4/3$ for any isolated plane curve singularity?
\end{ques}
Furthermore, they show with an example that this bound is asymptotically sharp.

\vskip 2mm

The purpose of this note is to show that Question \ref{conjecture} has a positive answer, using some known results in two cases: the case of one Puiseux pair and for semi-quasi-homogeneous singularities. By a well-known result of Zariski \cite{zariski-moduli}, the later case contains the former.  However, we decided to include both proofs as the approaches are fundamentally different and may lead to different more general cases of the question. The proof for the first case is based on the results of Delorme \cite{delorme78} and Teissier \cite{teissier-appendix}. For the second case, we use the ideas of Briançon, Granger and Maisonobe \cite{brian}. We also show at the end of this note that the bound also holds for a non-trivial family with two Puiseux pairs studied by Luengo and Pfister \cite{luengo}. All this gives further evidences for a positive answer of the question in the general case.

\vskip 2mm

\textbf{Acknowledgments.} The authors would like to thank, M. Alberich-Carrami\~{n}ana, M. González-Villa, A. Melle-Hern\'{a}ndez and  J. Àlvarez-Montaner for the helpful comments and suggestions.

\section{One Puiseux pair}\label{sec-irred}
In this section we will assume that \( f \) has a single Puiseux pair \( (n, m) \). We will denote by \( \Gamma = \langle n, m \rangle, n < m \) with \( \gcd(n, m) = 1 \) the semigroup of \( f \). Ebey proves in \cite{ebey} that  the moduli space of curves having a given semigroup is in bijection with a constructible algebraic subset of some affine space. For this, he shows that the moduli space is a quotient of an affine space by an algebraic group. Consequently, Zariski \cite[\S VI]{zariski-moduli} defines the generic component of the moduli space as the variety representing the generic orbits of this group action.

\vskip 2mm

Following the ideas of Zariski in \cite{zariski-moduli}, Delorme \cite{delorme78} computed the dimension of the generic component \( q_{n, m} \) of the moduli space of plane branches with a single Puiseux pair \((n, m) \).

\begin{theorem}[{\cite[Thm.~32]{delorme78}}]\label{thmdelorme}
Consider the continued fraction representation \( m/n = [h_1, h_2, \dots, h_k] 
\), with \( k \geq 2, h_1 > 0 \) and \( h_2 > 0 \).
Define, inductively, the following numbers
\[
r_k := 0, \quad t_k := 1, \quad
r_{i-1} := r_i + t_i h_i,  \quad t_{i-1} :=
  \begin{cases}
  \, 0,\quad \textrm{if}\, \ t_i = 1\ \textrm{and}\, \ r_{i-1}\ \textrm{even},\\
  \, 1,\quad \textrm{otherwise}.
  \end{cases}
\]
Then, the dimension \( q_{n,m} \) of the generic component of the moduli space is given by
\[ q_{n, m} = \frac{(n-4)(m-4)}{4} + \frac{r_0}{4} + \frac{(2- t_1)(h_1-2)}{2} - \frac{t_1 t_2}{2}.\]
In particular, except for the case \( (n, m) = (2, 3) \),
\begin{equation} \label{eq:bounds}
\frac{(n-4)(m-4)}{4} \leq q_{n,m} \leq \frac{(n-3)(m-3)}{2}.
\end{equation}
\end{theorem}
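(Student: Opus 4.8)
The plan is to reduce both inequalities in \eqref{eq:bounds} to elementary estimates on the quantity produced by Delorme's formula. Writing
\[ q_{n,m} = \frac{(n-4)(m-4)}{4} + \frac{\rho}{4}, \qquad \rho := r_0 + 2(2-t_1)(h_1-2) - 2 t_1 t_2, \]
and using the identity $\tfrac{(n-3)(m-3)}{2} - \tfrac{(n-4)(m-4)}{4} = \tfrac{nm - 2n - 2m + 2}{4}$, the two bounds become equivalent to the single chain $0 \le \rho \le nm - 2n - 2m + 2$. The only facts about the recursion I would use are that it telescopes to $r_0 = \sum_{i=1}^k t_i h_i$, that $t_k = 1$, that $h_i \ge 1$ with $h_k \ge 2$ (the normalization of the continued fraction), and that by definition $t_{i-1} = 0$ forces $t_i = 1$ and $r_{i-1}$ even.

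For the lower bound $\rho \ge 0$ I would argue by cases on $t_1$ and $t_2$. When $t_1 = 1$ one has $r_0 \ge h_1 + h_k \ge h_1 + 2$, so $\rho \ge 3h_1 - 2 - 2t_2 \ge 2$ as soon as $h_1 \ge 2$; the delicate case is $h_1 = 1$, where the parity clause in the definition of the $t_i$ (together with $h_k \ge 2$) is exactly what forces $r_0 \ge 4$ whenever $t_2 = 1$, and $k \ge 3$ hence $r_0 \ge 1 + h_k \ge 3$ whenever $t_2 = 0$. When $t_1 = 0$ the recursion gives $r_0 = r_1$ even with $t_2 = 1$, so $\rho = r_0 + 4(h_1 - 2) \ge 0$ for $h_1 \ge 2$; for $h_1 = 1$ one needs $r_0 \ge 4$, and the only way to produce the even value $r_0 = 2$ is $m/n = [1,2] = 3/2$, i.e. $(n,m) = (2,3)$. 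This singles out precisely the stated exceptional pair.

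For the upper bound $\rho \le nm - 2n - 2m + 2$ the first step is the crude estimate $r_0 \le \sum_{i=1}^k h_i \le m$, the second inequality proved by induction on the length $k$ via the Euclidean step $\sum_{i=1}^k h_i = h_1 + \sum_{i \ge 2} h_i$, where $\sum_{i \ge 2} h_i$ is the corresponding sum for $n/(m \bmod n)$. For $n \ge 4$ the slack in the quadratic is then enough: combining $r_0 \le m$ with $h_1 = \lfloor m/n \rfloor \le m/4$ gives $\rho \le 2m - 8$ when $h_1 \ge 2$ and the sharper $\rho \le r_0 - 2 \le m - 2$ when $h_1 = 1$, and a one-line computation shows both are $\le nm - 2n - 2m + 2$. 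The remaining value $n = 3$ is genuinely tight and I would treat it by hand: according to $m \bmod 3$ the fraction $m/3$ equals $[h_1, 3]$ or $[h_1, 1, 2]$, and in both cases the recursion yields $\rho = m - 4 = nm - 2n - 2m + 2$, so the bound holds with equality (and $q_{3,m} = 0$).

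The main obstacle is that the upper bound is not uniform in $n$: the easy quadratic slack is only available for $n \ge 4$, whereas for $n = 3$ the inequality is an \emph{equality}, so no crude estimate can establish it and the small cases must be computed directly from the continued fraction. For $n = 2$ the branch is the quasi-homogeneous $A$-type singularity, irrelevant to the Dimca--Greuel quotient, and I would simply record this. I expect the parity bookkeeping in the lower bound and the exact $n = 3$ computation to be the only genuine work; the rest is routine.
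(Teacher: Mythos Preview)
The paper does not prove this theorem: it is quoted verbatim from Delorme \cite[Thm.~32]{delorme78} and used as a black box (only the lower bound in \eqref{eq:bounds} is ever invoked, in deriving \eqref{eq:bound-tau-min}). There is therefore no in-paper argument to compare your proposal against.

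That said, your reduction to the chain \(0\le\rho\le nm-2n-2m+2\) and the case analysis for the lower bound are sound; the parity clause in the definition of \(t_{i-1}\) is exactly what isolates \((n,m)=(2,3)\), as you observe. Your upper-bound argument for \(n\ge 4\) and the direct computation for \(n=3\) (where \(\rho=m-4\) and equality holds) are also correct.

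The genuine gap is your treatment of \(n=2\). You dismiss it as ``irrelevant to the Dimca--Greuel quotient'', but the theorem as stated claims the bounds for all \((n,m)\ne(2,3)\), so you must still address it. In fact you cannot: for \(n=2\) the recursion gives \(q_{2,m}=0\) (as the paper itself records later), while the upper bound \(\tfrac{(n-3)(m-3)}{2}=\tfrac{3-m}{2}\) is \emph{negative} for every odd \(m\ge 5\). Thus the upper inequality in \eqref{eq:bounds}, read literally, fails for all pairs \((2,m)\) with \(m\ge 5\), not just \((2,3)\). Your plan could not have closed this case because the statement (as quoted here) is inaccurate there; the exclusion should presumably be \(n=2\) rather than the single pair \((2,3)\), or the upper bound should carry the extra hypothesis \(n\ge 3\). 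Since the paper only uses the lower bound, this does not affect any of its results.
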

The bound in the left-hand side of Equation \ref{eq:bounds} is sharp, consider for instance, the characteristic pair \( n = 8, m = 11 \). In the Appendix \cite{teissier-appendix} of \cite{zariski-moduli}, Teissier, using the monomial curve \( C^\Gamma \), proves that, in general, the dimension \( q \) of the generic component of the moduli space of plane branch with semigroup \( \Gamma \) is given by
\begin{equation} \label{eq:dimension-tau-min}
  q = \tau_{-} - (\mu - \tau_{min}),
\end{equation}
where \( \tau_{-} \) is the dimension of the miniversal constant semigroup deformation of the monomial curve \( C^\Gamma \). For one characteristic exponent we have that \( \tau_{-} \) is the number of points of the standard lattice of \(\mathbb{R}^2\) that are in the interior of the triangle defined by the lines \(\alpha=m-1,\;\;\beta=n-1,\;\;\alpha n+\beta m=nm\), see \cite[\S VI.2]{zariski-moduli}. Therefore, it is easy to see that
\[ \tau_{-} = \frac{(n-3)(m-3)}{2} + \left[\frac{m}{n}\right] - 1, \]
where \( [\, \cdot \,] \) denotes the integer part. In this case, the Milnor number is \( \mu= (n-1)(m-1) \). Combining the lower bound in Equation \ref{eq:bounds} and Equation \ref{eq:dimension-tau-min} one obtains the following lower bound for \( \tau_{min} \)
\begin{equation} \label{eq:bound-tau-min}
\frac{(n-4)(m-4)}{4} + (n-1)(m-1) -\frac{(n-3)(m-3)}{2} - \frac{m}{n} + 1 \leq \tau_{min}.
\end{equation}
except for the case \( (n, m) = (2, 3). \)

\begin{proposition}\label{4/3irreducible}
For any plane branch with one characteristic exponent, \( {\mu}/{\tau} < {4}/{3} \).
\end{proposition}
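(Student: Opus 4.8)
The plan is to reduce the claim to a lower bound for the minimal Tjurina number $\tau_{min}$. Since $\mu = (n-1)(m-1)$ is a topological invariant and every branch in the equisingularity class of $\Gamma = \langle n, m \rangle$ satisfies $\tau \geq \tau_{min}$, we have $\mu/\tau \leq \mu/\tau_{min}$; hence it suffices to prove $\mu/\tau_{min} < 4/3$, that is $4\tau_{min} > 3\mu$. Using Teissier's identity $\tau_{min} = q + \mu - \tau_-$ from Equation \ref{eq:dimension-tau-min}, this is equivalent to
\[ 4q + \mu > 4\tau_-, \qquad \text{where } \tau_- = \frac{(n-3)(m-3)}{2} + \left[\frac{m}{n}\right] - 1. \]
I would estimate $4q$ by cases according to the size of $n$, because the lower bound $q \geq (n-4)(m-4)/4$ from Equation \ref{eq:bounds} degenerates (it becomes negative) precisely when $n \leq 3$.

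For $n \geq 4$ I would use directly the lower bound for $\tau_{min}$ recorded in Equation \ref{eq:bound-tau-min}. A direct expansion gives
\[ \left(\frac{(n-4)(m-4)}{4} + \mu - \frac{(n-3)(m-3)}{2} - \frac{m}{n} + 1\right) - \frac{3}{4}\mu = \frac{n+m+3}{4} - \frac{m}{n}, \]
so the desired inequality $\tau_{min} > \tfrac34\mu$ reduces to $n(n+m+3) > 4m$, i.e. to $n^2 + 3n + m(n-4) > 0$, which is immediate once $n \geq 4$.

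For $n \in \{2,3\}$ I would instead use only the trivial bound $q \geq 0$, so that $\tau_{min} \geq \mu - \tau_-$ and it is enough to check $4\tau_- < \mu$. When $n = 2$ the branch has no moduli (the formulas above yield $q_{2,m} = 0$ and $\tau_- = 0$), hence it is analytically equivalent to the quasi-homogeneous germ $y^2 - x^m$ and $\mu = \tau$; in particular $\mu/\tau = 1 < 4/3$, and this also disposes of the excluded pair $(2,3)$, the ordinary cusp. When $n = 3$ one has $\tau_- = [m/3] - 1$, and $\mu - 4\tau_- = 2m + 2 - 4[m/3] > 0$ since $2[m/3] \leq 2m/3 < m + 1$.

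The only genuine obstacle is the breakdown of Delorme's lower bound for $n \leq 3$: there the generic component of the moduli space may have large dimension while $(n-4)(m-4)/4$ is negative, so the single estimate that settles $n \geq 4$ gives no information. The remedy is exactly the case split above, replacing Delorme's bound by $q \geq 0$ (equivalently, by quasi-homogeneity when $n = 2$); what remains are only the elementary manipulations with the integer part $[m/n]$ indicated above.
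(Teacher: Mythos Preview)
Your argument is correct and follows the same strategy as the paper: use the lower bound for $\tau_{min}$ in Equation~\ref{eq:bound-tau-min} (equivalently, Delorme's inequality $q\ge (n-4)(m-4)/4$) to reduce the case $n\ge 4$ to the elementary inequality $n^2+3n+m(n-4)>0$, and then treat $n=2,3$ separately. Your handling of $n=3$ via the trivial bound $q\ge 0$ and the direct estimate $\mu>4\tau_-$ is in fact slightly cleaner than the paper's, which instead computes $q_{3,m}=0$ explicitly from Delorme's continued-fraction formula before bounding $\mu/\tau_{min}$.
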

\begin{proof}

It is sufficient to proof the inequality for the \( \tau_{min} \) of each characteristic pair \( (n, m) \). Dividing \( \mu \) by the expression in Equation \ref{eq:bound-tau-min} and rewriting
\begin{equation} \label{eq:proof-1}
\frac{\mu}{\tau} \leq \frac{\mu}{\tau_{min}} \leq \frac{4 n (n - 1) (m - 1)}{3n^2m - 2n^2 - 2nm + 6n - 4m},
\end{equation}
assuming always that \( (n, m) \neq (2, 3), n < m \). The upper bound in Equation \ref{eq:proof-1} is strictly smaller than \( 4/3 \) if and only if \( 0 < m(n-4) + n(n+3) \). Therefore, the result holds if \( n \geq 4 \). The cases \( n = 2 \) and \( n = 3 \) follow from computing the \( \tau_{min} \) using Theorem \ref{thmdelorme}.

\vskip 2mm

Indeed, let \( n = 2 \) and \( m = 2h_1 + 1, h_1 > 1 \) so the continued fraction representation is \( m/n = [h_1, 2] \). Then, \( r_0 = 2, t_1 = 0, t_2 = 1 \) and \( q_{2, m} = h_1 -m/2 - 1/2 = 0\). Analogously, if \( n =3 \), then \( m = 3h_1 + 1  \) or \( m = 3h_1 + 2 \); the continued fractions are either \( m/n = [h_1, 3] \) or \( m/n = [h_1, 1, 2] \). Then, \( r_0 = 3 + h \) or \( r_0 = 2 + h \), \( t_2 = 1 \) or \( t_2 = 0 \), respectively, and \( t_1 = 1 \) in either case. Consequently, in both cases, \( q_{3, 3h_1+1} = -m/4 + 3h_1/4 + 1/4 = 0 \) and \( q_{3, 3h_1 + 1} = -m/4 + 3h_1/4 + 1/2 = 0 \). Finally, since \( \tau_{-} = 0 \) if \( n= 2 \) and \( \tau_{-} = h_1 - 1 \) if \( n = 3 \),
\[
\frac{\mu}{\tau_{min}} = 1 < \frac{4}{3}, \qquad \frac{\mu}{\tau_{min}} < \frac{6m - 6}{5m-3} < \frac{6}{5} < \frac{4}{3},
\]
for \( n = 2, m \geq 3 \) and \( n = 3, m \geq 4 \), respectively.
\end{proof}

\section{Semi-quasi-homogeneous singularities}\label{sec-multi}
We assume now that \( f \) is a semi-quasi-homogeneous singularity with weights \( w = (n, m) \) such that \( \gcd(n, m) \geq 1 \) and \( n, m \geq 2 \). This means that \( f = f_0 + g \) is a deformation of the initial term \( f_0 = y^n - x^m \) such that \( \deg_w(f_0) < \deg_w(g) \). In \cite{brian}, Brian\c{c}on, Granger and Maisonobe, using the technique of escaliers, give recursive formulas to compute the \( \tau_{min} \) of this type of singularities. Their main result is the following:
\begin{theorem}[{\cite[\S I.6]{brian}}] \label{formulatau}
For semi-quasi-homogeneous singularities with initial term \mbox{\( y^n - x^m \)},
$$\tau_{min}=(m-1)(n-1)-\sigma(m,n).$$
\end{theorem}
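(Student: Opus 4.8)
The plan is to reduce the computation of \( \tau_{min} \) to the dimension of a single principal ideal inside the Milnor algebra. For the principal part \( f_0 = y^n - x^m \) the Jacobian ideal is \( (x^{m-1}, y^{n-1}) \), so the Milnor algebra \( M = \mathbb{C}\{x,y\}/(\partial f/\partial x, \partial f/\partial y) \) has the monomial basis \( \{ x^a y^b : 0 \le a \le m-2,\ 0 \le b \le n-2 \} \) and dimension \( \mu = (m-1)(n-1) \), independently of the chosen deformation. Since \( (\partial f/\partial x, \partial f/\partial y) \subseteq (f, \partial f/\partial x, \partial f/\partial y) \), one has the identity
\[
\tau = \mu - \dim_{\mathbb{C}} \big( \overline{f}\, M \big),
\]
where \( \overline{f}\, M \) denotes the ideal generated by the class of \( f \) in \( M \). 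Minimizing \( \tau \) over the semi-quasi-homogeneous family therefore amounts to maximizing \( \dim_{\mathbb{C}}(\overline{f}\, M) \), and the content of the theorem is that this maximal (hence generic) value is exactly \( \sigma(m,n) \).

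Second, I would make this quantity computable through the escalier of the Tjurina ideal. Equip \( \mathbb{C}\{x,y\} \) with the weighted degree attached to \( w = (n,m) \), refined to a monomial order, and compute a standard basis of \( (f, \partial f/\partial x, \partial f/\partial y) \). The complement of the resulting initial ideal — the escalier — is a monomial basis of the Tjurina algebra, so \( \tau \) equals the number of its lattice points. Because \( f \) is semi-quasi-homogeneous, the initial forms of the two partials are \( x^{m-1} \) and \( y^{n-1} \), whose escalier is precisely the \( (m-1) \times (n-1) \) rectangle computing \( \mu \); the drop \( \sigma(m,n) \) is then the number of lattice points that the standard-basis elements produced from \( \overline{f} \), together with the \( S \)-polynomials they generate, remove from this rectangle for a generic tail \( g \).

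Finally, the combinatorial heart is to show that this number of removed points is exactly \( \sigma(m,n) \) and that it is attained generically. I expect \( \sigma \) to obey a Euclidean-type recursion in \( (m,n) \), mirroring the continued-fraction recursion of Theorem \ref{thmdelorme}, arising from the successive reductions whose leading exponents descend according to the division with remainder of \( m \) by \( n \). The main obstacle is precisely this genericity-and-exactness step: proving that for a generic deformation the standard basis behaves as expected, with no accidental extra cancellations nor failures of reduction, so that the escalier stabilizes to its minimal shape and \( \dim_{\mathbb{C}}(\overline{f}\, M) \) equals \( \sigma(m,n) \) rather than merely being bounded by it. Exhibiting one deformation that realizes this extremal escalier and invoking the upper semicontinuity of \( \tau \) to conclude that the generic stratum attains it is where the real work lies; the remaining lattice-point bookkeeping is routine.
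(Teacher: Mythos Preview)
The paper does not prove this theorem: it is quoted as a result of Brian\c{c}on--Granger--Maisonobe \cite{brian} and used as a black box in the proof of Proposition~\ref{43completo}, so there is no in-paper argument to compare your proposal against.

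That said, your outline is a faithful sketch of the escalier method of \cite{brian}. The reduction $\tau = \mu - \dim_{\mathbb{C}}(\overline{f}\,M)$ is correct, the weighted-order standard basis of the Tjurina ideal does have initial part $(x^{m-1}, y^{n-1})$ because $f$ is semi-quasi-homogeneous, and the recursion you anticipate for the number of lattice points removed from the $(m-1)\times(n-1)$ rectangle is precisely the one recorded in cases (A), (AE), (AO), (B), (BP), (BO) immediately following the theorem statement. You correctly identify the genuine obstacle: establishing that for a \emph{generic} tail $g$ the escalier stabilises to its extremal shape, so that $\dim_{\mathbb{C}}(\overline{f}\,M)$ equals $\sigma(m,n)$ and not merely is bounded by it. This genericity analysis --- tracking the successive $S$-polynomial reductions and showing no accidental cancellations occur on a Zariski-open set of deformations --- is the bulk of \cite{brian} and is not something your sketch yet carries out; what you have is a correct plan, not a proof.
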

The number $\sigma(a,b)$ is defined recursively for any non-negative integers \( a, b \) as follows. If $a, b \leq 2$ then $\sigma(a,b):=0$. Otherwise, we can express $a=bq+r, 0\leq r < b, q\geq 1$. For the cases $r=0,1,b-1,b/2$ there are closed formulas for $\sigma(a,b)$ denoted by $\Sigma_0,\Sigma_1,\Sigma_{b-1}, \Sigma_{b/2}$, see Table 1 in \cite{brian}. If none of the above cases hold, define recursively, see Tables 2 and 3 in \cite{brian}, a finite sequence  $(a_0, b_0), (a_1,b_1), \dots, (a_k, b_k)$ with \( (a_0,b_0) = (m,n) \), $\sigma(a_k,b_k)$ is in one of the previous cases, and for $i=0,\dots,k-1$:
\vskip 1mm
\begin{enumerate}
\item [(A)] If $\gcd(a_i,b_i)=1$, we can find $ub_i-va_i=1$ with $2\leq u < a_i$. Letting $\gamma:=[\frac{a_i-1}{u}]$, we have two subcases:
\vskip 1mm
\begin{enumerate}
\item [(AE)] If $\gamma$ is even, define $a_{i+1}=a_i-\gamma u, b_{i+1}=b_i-\gamma v$, then $$\sigma(a_i,b_i):=\frac{(a_i-2)(b_i-2)}{4}-\frac{(a_{i+1}-2)(b_{i+1}-2)}{4}-\frac{\gamma}{4}+\sigma(a_{i+1},b_{i+1}).$$
\item [(AO)] If $\gamma$ is odd, define $a_{i+1}=(\gamma+1)u-a_i, b_{i+1}=(\gamma+1) v-b_i,$ and $$\sigma(a_i,b_i):=\frac{(a_i-2)(b_i-2)}{4}-\frac{(a_{i+1}-2)(b_{i+1}-2)}{4}-\frac{\gamma+1}{4}+\sigma(a_{i+1},b_{i+1}).$$
\end{enumerate}
\item[(B)] Otherwise, $a_i=\alpha a', b_i=\alpha b'$ with $\alpha\geq 2, \gcd(a',b')=1$, and we can find a Bezout's identity $ub'-va'=1$ with $1\leq u < a'$. We have again two subcases:
\vskip 2mm
\begin{enumerate}
\item [(BP)] If $\alpha$ is even,
$$\sigma(a_i,b_i):=\frac{(a_i-2)(b_i-2)}{4}-\frac{\alpha}{2}.$$
\item [(BO)] If $\alpha$ is odd, define $a_{i+1}=|a'-2u|$ and $b_{i+1}=|b'-2v|$, and
$$\sigma(a_i,b_i):=\frac{(a_i-2)(b_i-2)}{4}-\frac{\alpha}{2}-\frac{(a_{i+1}-2)(b_{i+1}-2)}{4}+\sigma(a_{i+1},b_{i+1}).$$
\end{enumerate}

\end{enumerate}
\begin{proposition}\label{43completo}
For any semi-quasi-homogeneous singularities with initial term $y^n - x^m$, \[ \mu / \tau < 4/3. \]
\end{proposition}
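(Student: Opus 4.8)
The plan is to collapse the entire proposition onto a single, clean inequality for the correction term $\sigma(m,n)$. Since $\tau \geq \tau_{min}$ for any semi-quasi-homogeneous singularity with initial term $y^n-x^m$, and since the Milnor number of $f$ equals that of its principal part $f_0 = y^n - x^m$, namely $\mu = (n-1)(m-1)$ (the Jacobian ideal of $f_0$ is $(x^{m-1},y^{n-1})$), it suffices to bound $\mu/\tau_{min}$. Invoking Theorem \ref{formulatau} and cross-multiplying in
\[
\frac{\mu}{\tau_{min}} = \frac{(n-1)(m-1)}{(n-1)(m-1) - \sigma(m,n)},
\]
one sees that $\mu/\tau_{min} < 4/3$ is equivalent to the inequality $\sigma(m,n) < \frac{(n-1)(m-1)}{4}$. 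Thus everything reduces to controlling the size of $\sigma$.

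To prove this, I would establish the slightly stronger and structurally natural bound
\[
\sigma(a,b) \leq \frac{(a-2)(b-2)}{4}
\]
for every pair $(a,b)$ occurring in the recursive algorithm, by induction along the terminating sequence $(a_0,b_0),\dots,(a_k,b_k)$. The key observation is that each of the four recursive cases (AE), (AO), (BP), (BO) has a telescoping shape: the term $\frac{(a_i-2)(b_i-2)}{4}$ enters with a positive sign, the term $\frac{(a_{i+1}-2)(b_{i+1}-2)}{4}$ with a negative sign, and the remaining corrections ($-\gamma/4$, $-(\gamma+1)/4$, $-\alpha/2$) are manifestly nonpositive. Feeding in the inductive hypothesis $\sigma(a_{i+1},b_{i+1}) \leq \frac{(a_{i+1}-2)(b_{i+1}-2)}{4}$, the two quartic terms cancel exactly, leaving $\sigma(a_i,b_i) \leq \frac{(a_i-2)(b_i-2)}{4}$ minus a nonnegative quantity. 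Crucially this cancellation is independent of the signs of the intermediate factors, so no positivity of $(a_{i+1}-2)(b_{i+1}-2)$ is needed; the recursive step is essentially automatic.

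It then remains to verify the base of the induction, that the bound holds at the terminal pairs. The case $a,b \leq 2$ is immediate, since there $\sigma=0$ while $(a-2)(b-2)\geq 0$. The four closed-form cases $\Sigma_0,\Sigma_1,\Sigma_{b-1},\Sigma_{b/2}$ of Table 1 in \cite{brian} must each be checked directly against $\frac{(a-2)(b-2)}{4}$. I expect this to be the only genuinely computational part, and the main obstacle, since it requires unpacking the explicit formulas from \cite{brian} rather than exploiting the uniform telescoping structure above.

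Finally, applying the established bound to the initial pair $(m,n)$ gives $\sigma(m,n) \leq \frac{(m-2)(n-2)}{4}$, and since $n,m \geq 2$ we have
\[
\frac{(m-2)(n-2)}{4} < \frac{(m-1)(n-1)}{4},
\]
the difference being $\frac{m+n-3}{4} > 0$. This yields the strict inequality $\sigma(m,n) < \frac{(n-1)(m-1)}{4}$, and hence $\mu/\tau \leq \mu/\tau_{min} < 4/3$, as claimed.
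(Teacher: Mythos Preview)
Your plan is essentially the paper's own argument: reduce to $\sigma(m,n) < (m-1)(n-1)/4$, then telescope through the recursion using that each step has the shape $\sigma(a_i,b_i) = \frac{(a_i-2)(b_i-2)}{4} - \frac{(a_{i+1}-2)(b_{i+1}-2)}{4} - (\text{nonnegative}) + \sigma(a_{i+1},b_{i+1})$. The one substantive difference is at the terminal step. You hope to verify the clean inequality $\sigma(a_k,b_k) \leq \frac{(a_k-2)(b_k-2)}{4}$ for each closed-form case $\Sigma_0,\Sigma_1,\Sigma_{b-1},\Sigma_{b/2}$, and you correctly flag this as the only real obstacle. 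The paper does \emph{not} claim this clean bound; instead it allows an excess $\kappa(n,m)$ (equal to $m/4n$, $5/4$, or $0$ according to the terminal case) so that $\sigma(m,n) \leq \frac{(m-2)(n-2)}{4} + \kappa(n,m)$, and then checks that $n+m > 3 + \kappa(n,m)$. To bound the terminal excess in terms of the \emph{original} $(m,n)$ the paper also uses the monotonicity $a_i b_{i+1} > b_i a_{i+1}$ along the recursion, an ingredient your plan does not have. So if, when you unpack Table~1 of \cite{brian}, the closed forms happen to satisfy your clean bound, your argument is shorter; if not, you will need exactly the paper's extra step (tracking the slope $a_i/b_i$ back to $m/n$) to absorb the overshoot into the slack $(m+n-3)/4$.
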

\begin{proof}
Observe that in the recursive cases (A) and (BO),
\vspace{-0.1cm}
\[ \sigma(a, b) \leq \frac{(a-2)(b-2)}{4} - \frac{(a_k - 2)(b_k - 2)}{4} + \sigma(a_k, b_k), \]
where \( \sigma(a_k, b_k) \) is either zero or has a closed form. Notice also that \( a_i b_{i+1} > b_i a_{i+1} \) for all \( i = 0, \dots, k - 1 \). From these observations, one can deduce that, in general,
\vspace{-0.1cm}
\[ (n-1)(m-1) - \frac{(m-2)(n-2)}{4} - \kappa(n, m) \leq \tau_{min}, \]
where \( \kappa(n, m) = m/4n \) if $\sigma(a_k,b_k)$ is $\Sigma_0,\Sigma_1,\Sigma_{b-1}$ with $b$ odd, \( \kappa(n, m) = 5/4 \) if $\sigma(a_k,b_k)$ is $\Sigma_0,\Sigma_1,\Sigma_{b-1}$ with $b$ even or $\Sigma_{b/2}$ with $b/2$ odd, and \(\kappa(n, m) = 0\) if $\sigma(a_k, b_k)$ is $\Sigma_{b/2}$ with $b/2$ even or in the case (BP). In any case,
\[ \frac{\mu}{\tau} \leq \frac{\mu}{\tau_{min}} \leq \frac{4(n-1)(m-1)}{3nm-2n-2m-4 \kappa(n, m)}, \]
which is bounded by \( 4/3 \) if and only if $n + m + \kappa(n, m) > 3$, which is true for \( n, m \geq 2 \).
\end{proof}

\vspace{-0.2cm}
\section{A family with two Puiseux pairs}

In \cite{luengo}, Luengo and Pfister study the family of irreducible plane curve singularities with semigroup \( \langle 2p, 2q, 2pq + d \rangle \) such that \( \gcd(p, q) = 1, p < q \) and \( d \) odd. The Milnor number of this family equals
\vspace{-0.1cm}
\[ \mu = (2p - 1)(2q - 1) + d. \]
Studying the kernel of the Kodaira-Spencer map, they prove, see \cite[pg. 259]{luengo}, that \( \tau \) is constant in each equisingularity class and equals,
\vspace{-0.1cm}
\[ \tau = \mu - (p-1)(q-1). \]
One can easily check that \( \mu / \tau < 4/3 \) for all the semigroups of the family.

\vspace{-0.1cm}

\end{document}